\documentclass[a4paper,11pt]{amsart}

\usepackage{epsfig}
\usepackage{amsthm,amsfonts}
\usepackage{amssymb,graphicx,color}
\usepackage[all]{xy}
\usepackage{verbatim}
\usepackage{hyperref}
\usepackage{enumitem}

\newtheorem{theorem}{Theorem}[section]
\newtheorem*{theorem*}{Theorem}

\newtheorem{definition}[theorem]{Definition}
\newtheorem{conjecture}{Conjecture}

\newtheorem{remark}[theorem]{Remark}

%Atalhos de objetos matematicos

\newcommand{\C}{\mathbb{C}}

\begin{document}

\title[On metric equivalence of the Brieskorn-Pham hypersurfaces]
{On metric equivalence of the Brieskorn-Pham hypersurfaces}

\author[A. Fernandes]{Alexandre Fernandes}
\author[Z. Jelonek]{Zbigniew Jelonek}
\author[J. E. Sampaio]{Jos\'e Edson Sampaio}

\address[Zbigniew Jelonek]{ Instytut Matematyczny, Polska Akademia Nauk, \'Sniadeckich 8, 00-656 Warszawa, Poland. \newline  
              E-mail: {\tt najelone@cyf-kr.edu.pl}
}

{\address[Jos\'e Edson Sampaio \& Alexandre Fernandes]{    
              Departamento de Matem\'atica, Universidade Federal do Cear\'a,
	      Rua Campus do Pici, s/n, Bloco 914, Pici, 60440-900, 
	     Fortaleza-CE, Brazil. \newline  
               E-mail: {\tt edsonsampaio@mat.ufc.br}
}

\keywords{Zariski's Multiplicity Conjecture,  Bi-Lipschitz homeomorphism, Degree, Multiplicity}
\subjclass[2010]{14B05, 32S50, 58K30 (Primary) 58K20 (Secondary)}
\thanks{The first named author was partially supported by CNPq-Brazil grant 304700/2021-5. The second named author is partially supported by the grant of Narodowe Centrum Nauki number. The third named author was partially supported by CNPq-Brazil grant 310438/2021-7 and by the Serrapilheira Institute (grant number Serra -- R-2110-39576).
}
\begin{abstract}
We show that two bi-Lipschitz equivalent Brieskorn-Pham hypersurfaces have the same multiplicities at $0$. Moreover we show that 
if two algebraic $(n-1)$-dimensional cones $P, R\subset\mathbb C^n$ with isolated singularities are homeomorphic, then they have the same degree. 
\end{abstract}

\maketitle

\section{Introduction}

Recently, in \cite{BobadillaFS:2018} the following conjecture was proposed:
\begin{conjecture} \label{conj_local}
Let $X\subset \C^n$ and $Y\subset \C^m$ be two complex analytic sets with $\dim X=\dim Y=d$. If their germs at zero are bi-Lipschitz homeomorphic, then their multiplicities $m(X,0)$ and $m(Y,0)$ are equal.
\end{conjecture}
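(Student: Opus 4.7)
The plan is to reduce the conjecture to a statement about algebraic cones. First, I invoke the theorem of Sampaio that bi-Lipschitz homeomorphism of analytic germs induces a bi-Lipschitz homeomorphism of the absolute tangent cones $C(X,0)$ and $C(Y,0)$. Since the multiplicity $m(X,0)$ equals the degree of $C(X,0)$ (counted with scheme-theoretic multiplicities), the conjecture would follow from the statement that bi-Lipschitz homeomorphic algebraic cones of the same pure dimension have the same degree. This reformulation is convenient because a cone admits a natural dilation action, and a local bi-Lipschitz homeomorphism at $0$ between two cones automatically extends to a bi-Lipschitz homeomorphism of them as metric spaces.

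Within the cone setting my main tool is density at the origin: for a pure-dimensional algebraic cone $C\subset\C^n$ of complex dimension $d$, the $2d$-dimensional Hausdorff density at $0$ equals $m(C,0)$ up to a universal constant, and for cones this density can be computed as the normalized $(2d-1)$-volume of the link $C\cap S^{2n-1}$. So the task becomes showing that the link volumes are preserved under bi-Lipschitz equivalence, possibly after iterating the scaling inherent in a cone to absorb the multiplicative distortion. For Brieskorn-Pham hypersurfaces the plan specializes cleanly: the tangent cone at $0$ of $\{x_1^{a_1}+\cdots+x_n^{a_n}=0\}$, with $m=\min_i a_i$, is the explicit algebraic hypersurface $\{\sum_{i:\,a_i=m}x_i^m=0\}$ of degree $m$, so the reduction to the cone case becomes entirely combinatorial and only involves comparing these very structured cones.

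For the companion topological claim concerning homeomorphic cones with isolated singularities, I would work through the link. A cone $P\subset\C^n$ of pure dimension $n-1$ with isolated singularity at $0$ is homeomorphic to the topological cone over its link $L_P=P\cap S^{2n-1}$, an embedded smooth real $(2n-3)$-manifold. A homeomorphism of the two cones restricts, after radial normalization, to a homeomorphism between their links inside $S^{2n-1}$. The degree $\deg P$ can then be recovered from the homology class carried by $L_P$ in the ambient sphere, which, together with Alexander duality and the Milnor fibration data attached to the isolated singularity, should be extractable from purely topological invariants of the embedded link.

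The main obstacle, and the reason the general conjecture has resisted proof for decades, is the gap between abstract bi-Lipschitz equivalence of tangent cones and preservation of the ambient degree. Sampaio's theorem produces a homeomorphism of tangent cones as metric spaces, not as subvarieties of $\C^n$, yet the degree is a priori an embedded invariant. Closing this gap requires either promoting the abstract equivalence to an ambient one (which fails for arbitrary bi-Lipschitz maps) or proving that the degree is intrinsic to the cone's inner metric geometry. In the special cases listed in the abstract this closure is achievable by exploiting extra structure — the combinatorics of exponents for Brieskorn-Pham, and the rigidity of link embeddings for cones with isolated singularities — but no uniform mechanism covering all analytic germs is currently known.
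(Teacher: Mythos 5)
The statement you are proving is Conjecture~\ref{conj_local}, and the paper does not prove it --- it cannot, because the conjecture as stated is \emph{false} for $d\ge 3$. The introduction itself cites the counterexamples: \cite{bfsv} exhibits two complex algebraic cones of dimension $3$ with isolated singularities that are bi-Lipschitz homeomorphic but have different multiplicities at the origin, and \cite{FernandesJS:2023} extends this to every dimension $k\ge 3$, even with semi-algebraic bi-Lipschitz equivalences. Your central reduction --- ``bi-Lipschitz homeomorphic algebraic cones of the same pure dimension have the same degree'' --- is exactly the statement those examples refute. The density mechanism you propose cannot close the gap: a bi-Lipschitz map controls the $(2d-1)$-volume of the link only up to the multiplicative constant $\Lambda^{2d-1}$ of the Lipschitz bounds, and the dilation action of the cone does not let you ``absorb'' this distortion into an exact equality of normalized volumes (if it did, the counterexamples could not exist). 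So the first half of your proposal establishes a reduction to a false statement, and the second half's key step fails for a concrete quantitative reason.

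What the paper actually proves is the special case of Brieskorn--Pham hypersurfaces (Theorem~\ref{4}), and the route there is topological rather than metric once Sampaio's tangent-cone theorem has been applied: after passing to the tangent cones $\{z_1^a+\cdots+z_k^a=0\}$ and reducing modulo the singular-locus dimension (which is a bi-Lipschitz invariant by \cite{Sampaio:2016}), one compares the \emph{homotopy types} of the links. The link of a cone over a smooth projective hypersurface fibers over that hypersurface with circle fibers (Hopf fibration), the spectral sequence expresses the Betti numbers of the link in terms of those of the base, and $b_n$ of a smooth degree-$d$ hypersurface is given by the explicit formula in Theorem~\ref{hyp}, which is strictly increasing in $d$; hence homotopy-equivalent links force equal degrees (Theorem~\ref{1}). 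Note this uses only the intrinsic homotopy type of the link, whereas your sketch for the isolated-singularity case invokes the class of $L_P$ in the ambient sphere, Alexander duality, and Milnor fibration data --- all embedded invariants that a non-ambient homeomorphism of the cones is not known to preserve. If you want to salvage your write-up, restrict the claim to the Brieskorn--Pham family (or to $d\le 2$, where the conjecture is known by \cite{BobadillaFS:2018}) and replace the volume argument by the Betti-number argument above.
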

Already in \cite{BobadillaFS:2018} the authors proved that Conjecture \ref{conj_local} has a positive answer for $d=2$.
The positive answer for $d=1$ was already known, since it follows from the bi-Lipschitz classification of germs of complex analytic curves (see \cite{N-P}, \cite{P-T} and \cite{F}). 
However, in dimension  three, Birbrair at al.  \cite{bfsv} have presented examples of complex algebraic cones $X$ and $Y$ with isolated singularity, which were bi-Lipschitz homeomorphic but with different multiplicities at the origin. Recently, the authors of this paper in \cite{FernandesJS:2023} proved that for every $k\ge 3$ there exist complex algebraic cones of dimension $k$ with isolated singularities, which are bi-Lipschitz and semi-algebraically equivalent but have different multiplicities. 
See \cite{FernandesS:2023} for a survey on this conjecture.

In this paper, we prove Conjecture \ref{conj_local} in the special case of Brieskorn-Pham hypersurfaces, i.e., hypersurfaces described by the formula $V=\{ z\in \C^n: z_1^{a_1}+z_2^{a_2} +...+z_{n}^{a_{n}}=0\},$ where $a_i\ge 2.$
More specifically, we show that two bi-Lipschitz equivalent Brieskorn-Pham hypersurfaces have the same multiplicity at $0$ (see Theorem \ref{4}). Moreover, we show that if two algebraic $n-1$ dimensional cones $P, R\subset\mathbb C^n$ with isolated singularity are homeomorphic, then they have the same degree (see Theorem \ref{hyp}). 

Note that for $V=\{ (x,y)\in \C^2: x^2+y^3=0\}$ and $W=\{ (x,y)\in \C^2: x^3+y^4=0\}$, we have that $(V,0)$ and $(W,0)$ are homeomorphic, but $m(V,0)=2\not=3=m(W,0).$ Hence in Theorem \ref{4} we cannot avoid assumption that homeomorphism is bi-Lipschitz.

\vspace{5mm}

\section{Cones with smooth bases}
We start with a definition:

\begin{definition}
Let $X\subset \mathbb {CP}^n$ be an algebraic variety. We assume $\mathbb {CP}^n$  to be a hyperplane at infinity of $\mathbb {CP}^{n+1}$.  Then by an algebraic cone $\overline{C(X)}\subset \mathbb {CP}^{n+1}$ with base $X$ we mean the set
$$\overline{C(X)}=\bigcup_{x\in X} \overline{O,x},$$ where $O$ is the center of coordinates in $\mathbb C^{n+1}\subset \mathbb {CP}^{n+1}$, and $\overline{O,x}$ means the projective line which goes through $O$ and $x.$ By an affine cone $C(X)$ we mean $\overline{C(X)}\setminus X.$
By the link of $C(X)$ we mean the set $L=\{ x\in C(X): ||x||=1\}.$
\end{definition}

The following theorem is well-known (see e.g., \cite{max})

\begin{theorem}\label{hyp}
Let $V\subset \C\mathbb P^{n+1}$ be a smooth algebraic hypersurface. Then the integral (co)homology of V is torsion free, and the corresponding Betti numbers are given as follows:

(1) $b_i(V)=0$ for $i\not=n  \ odd \ or \ i\not\in [0,2n].$

(2) $b_i(V)=1$ for $i\not=n$\ even \ and $i\in [0,2n]. $

(3) $b_n(V)=\frac{(d-1)^{n+2}+(-1)^{n+1}}{d} + \frac{3(-1)^{n}+1}{2}.$
\end{theorem}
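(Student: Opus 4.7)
The plan is to deduce the result from two classical facts: the Lefschetz hyperplane theorem and Poincar\'e duality, combined with a Chern class computation of the Euler characteristic.

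First, I would invoke the Lefschetz hyperplane theorem applied to the smooth ample divisor $V \subset \mathbb{CP}^{n+1}$. This yields that the inclusion $V \hookrightarrow \mathbb{CP}^{n+1}$ induces isomorphisms $H_i(V;\mathbb{Z}) \cong H_i(\mathbb{CP}^{n+1};\mathbb{Z})$ for $i < n$, and a surjection for $i = n$. Since the integral homology of $\mathbb{CP}^{n+1}$ is $\mathbb{Z}$ in every even degree in $[0,2n+2]$ and $0$ otherwise, this immediately gives $b_i(V) = 1$ for even $i < n$ and $b_i(V) = 0$ for odd $i < n$, with no torsion in those degrees.

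Second, I would use Poincar\'e duality on the compact complex manifold $V$ (real dimension $2n$): $H_i(V;\mathbb{Z}) \cong H^{2n-i}(V;\mathbb{Z})$. Combined with the universal coefficient theorem and the torsion-freeness already obtained in low degree, this propagates the desired Betti numbers and the absence of torsion to all degrees $i \neq n$. Freeness of the middle homology $H_n(V;\mathbb{Z})$ then follows because its torsion subgroup equals the torsion of $H_{n-1}(V;\mathbb{Z})$ via universal coefficients, which we just established is trivial.

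The middle Betti number $b_n(V)$ is then determined by the Euler characteristic: with all other Betti numbers pinned down, one gets
\[
\chi(V) = \#\{\text{even } i \in [0,2n]\setminus\{n\}\} + (-1)^n b_n(V),
\]
so $b_n(V)$ is forced once $\chi(V)$ is known. I would compute $\chi(V)$ via the adjunction formula, which gives the total Chern class $c(TV) = (1+H)^{n+2}(1+dH)^{-1}|_V$, where $H$ is the hyperplane class; integrating $c_n(TV)$ over $V$ (using $\int_V H^n = d$) yields the standard identity
\[
\chi(V) = \frac{(1-d)^{n+2} - 1}{d} + (n+2).
\]
Rearranging this according to the parity of $n$ (splitting into the two cases $n$ even and $n$ odd, where $(d-1)^{n+2} = \pm (1-d)^{n+2}$) produces exactly the stated closed form $b_n(V) = \frac{(d-1)^{n+2}+(-1)^{n+1}}{d} + \frac{3(-1)^n+1}{2}$.

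The only genuinely delicate step is the Chern class / Euler characteristic computation: the rest is a direct application of Lefschetz and duality. In fact, since the theorem is invoked as "well-known" and only used as a black box in what follows, a proper writeup could simply cite a standard reference (e.g.\ Milnor's \emph{Morse theory} for Lefschetz, and a textbook on complex surfaces/hypersurfaces for the Chern class formula) rather than reproduce the calculation.
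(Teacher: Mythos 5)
Your argument is correct, and it is the standard proof of this classical result: Lefschetz hyperplane for degrees below $n$, Poincar\'e duality plus universal coefficients to propagate the Betti numbers and torsion-freeness to degrees above $n$ and to the middle (torsion of $H_n(V)\cong H^n(V)$ equals the torsion of $H_{n-1}(V)$, which vanishes), and the Chern class computation $\chi(V)=\frac{(1-d)^{n+2}-1}{d}+(n+2)$ to pin down $b_n$. I checked that this value of $\chi(V)$, combined with the count of the remaining even Betti numbers in each parity case for $n$, reproduces exactly the closed form $b_n(V)=\frac{(d-1)^{n+2}+(-1)^{n+1}}{d}+\frac{3(-1)^{n}+1}{2}$. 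The paper itself offers no proof at all --- it states the theorem as well known and cites Maxim's survey --- so your writeup supplies precisely the argument that the cited reference contains; your closing remark that a citation would suffice is in fact what the authors do.
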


Now we are ready to prove:

\begin{theorem}\label{1}
Let $P=C(X),R=C(Y) \subset \C^{n+2}$ be two algebraic $n+1$ dimensional cones with smooth bases $X,Y.$ If $P, R$ are homeomorphic, then  ${\rm deg} \ P = {\rm deg} \ R.$ In particular $X$ and $Y$ are also homeomorphic.
\end{theorem}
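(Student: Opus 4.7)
The plan is to recover the degree from the topology of the link of the vertex. First, if $\deg X=1$, then $P$ is a linear subspace of $\mathbb{C}^{n+2}$, hence a topological manifold; a homeomorphism $P\cong R$ then forces $R$ to be a manifold too, and by smoothness of $Y$ this gives $\deg Y=1$. Hence I may assume $\deg X,\deg Y\ge 2$, in which case (as verified by the Gysin computation below) the link of the vertex has nontrivial cohomology, so the vertex is the unique non-manifold point of each cone. Any homeomorphism $h\colon P\to R$ must therefore send $0$ to $0$, and restriction to a small sphere yields a homeomorphism $L_P\cong L_R$ of the links.

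Next, I would compute the topology of $L_P$ in terms of $X$. The link $L_P$ is the unit circle bundle of $\mathcal{O}_{\mathbb{CP}^{n+1}}(-1)|_X$ over $X$, and by the Lefschetz hyperplane theorem (for $n\ge 2$) its Euler class is a primitive element of $H^2(X;\mathbb{Z})$. Running the Gysin sequence of the $S^1$-bundle and using Hard Lefschetz together with the identity $\int_X h^n=d$ to control cup product with $h$ on $H^*(X)$, one computes
\[ b_n(L_P)=\begin{cases} b_n(X)-1 & \text{if $n$ is even,}\\ b_n(X) & \text{if $n$ is odd,}\end{cases} \]
and finds that the torsion subgroup of $H^{n+2}(L_P)$ (for $n$ even) or $H^{n+1}(L_P)$ (for $n$ odd) contains a cyclic summand of order exactly $d$, confirming that $L_P$ is not a sphere when $d\ge 2$.

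From $L_P\cong L_R$ we then read off $b_n(X)=b_n(Y)$, and by Theorem~\ref{hyp} the formula giving $b_n$ as a function of $d$ is strictly increasing in $d\ge 2$ (an easy derivative check), so $\deg X=\deg Y$. The low-dimensional cases $n=0$ (the cone is a union of $d$ lines, and $d$ is the number of local topological branches at $0$) and $n=1$ fit into the same framework with minor adaptation. Finally, the ``in particular $X$ and $Y$ are homeomorphic'' assertion follows from the fact that smooth hypersurfaces of a fixed degree in $\mathbb{CP}^{n+1}$ form a connected family, hence any two are diffeomorphic by Ehresmann's fibration theorem.

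The principal technical obstacle is the Gysin computation in the middle step: one must carefully track that cup product with $h$ acts as the identity on ``most'' of the transitions $H^{2k}(X)\to H^{2k+2}(X)$ but as multiplication by $d$ on the one straddling the middle degree (the source of the order-$d$ cyclic torsion in $H^*(L_P)$). That torsion by itself already recovers $d$ and gives an alternative, torsion-based proof that bypasses the monotonicity analysis of Theorem~\ref{hyp}.
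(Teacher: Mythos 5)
Your main argument is, in substance, the paper's own: pass to the link of the vertex, regard it as the unit circle bundle of $\mathcal{O}(-1)|_X$ over the base, compute $b_n$ of the link in terms of $b_n(X)$ (you via the Gysin sequence, the paper via the spectral sequence of the Hopf fibration --- the same computation), and conclude via the hypersurface Betti-number formula of Theorem \ref{hyp} and its monotonicity in $d$. That route is correct and proves the theorem. One small inaccuracy: a homeomorphism $P\to R$ taking $0$ to $0$ does \emph{not} restrict to a homeomorphism of the links, since it need not preserve the distance to the origin; what you get is a homeomorphism $P\setminus\{0\}\cong R\setminus\{0\}$ and hence, because the links are deformation retracts of the punctured cones, a homotopy equivalence $L_P\simeq L_R$. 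That is all you use, so the slip is harmless, but it should be reworded. (The paper locates the vertex topologically by citing Prill's theorem, which also disposes of your $d=1$ case.)

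The substantive problem is your torsion claim for $n$ even, and with it the advertised ``alternative, torsion-based proof.'' In the Gysin sequence the torsion of $H^{n+2}(L_P;\Z)$ is the cokernel of $\cup h\colon H^{n}(X;\Z)\to H^{n+2}(X;\Z)\cong\Z$. Writing the generator $\beta$ of $H^{n+2}$ so that $\beta\cdot h^{n/2-1}=[\mathrm{pt}]$, one has $x\cup h=(x\cdot h^{n/2})\,\beta$, so by unimodularity of the intersection form on $H^n(X;\Z)$ this cokernel is $\Z/c$, where $c$ is the divisibility of $h^{n/2}$ in $H^n(X;\Z)$ --- not $\Z/d$. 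These can differ: the Remark following Theorem \ref{2} in the paper records exactly the counterexample, namely the quadric surface $Q=\mathbb{CP}^1\times\mathbb{CP}^1\subset\mathbb{CP}^3$ ($n=2$, $d=2$), for which $h=(1,1)$ is primitive in $H^2(Q;\Z)\cong\Z^2$, the link is $\mathbb{S}^2\times\mathbb{S}^3$, and the cohomology of $C(Q)\setminus\{0\}$ is torsion free. So for $n$ even the torsion does not recover $d$, and that shortcut must be deleted. (For $n$ odd your claim is correct: the relevant cokernel is that of $H^{n-1}\to H^{n+1}$, where $h^{(n-1)/2}\mapsto h^{(n+1)/2}=d\beta$, giving $\Z/d$; this, or Prill, is also what you need to see that the vertex is not a manifold point in the one case where the link is a rational homology sphere, namely $n$ odd and $d=2$.) Since your degree comparison ultimately rests on the rational Betti number $b_n$ and Theorem \ref{hyp}, the theorem itself is still established.
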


\begin{proof}
Since theorem is true for $1-$dimensional cones, we can assume that $P,R$ have connected bases. By \cite{prill} we can assume that $0$ is not topologically regular point of $P$ and $R$, because otherwise both
cones are hyperplanes. Hence $P^*=P\setminus \{0\}$ is homeomorphic to $R^*=R\setminus\{0\}.$ Let $L_P, L_R$ be links of $P,R$. Hence $L_P$ is a deformation retract of $P^*$, similarly $L_R$ is a deformation retract of $R^*$, 
Since $P^*$ and $R^*$ are homeomorphic we have that $L_P$ is homotopically equivalent to $L_R.$ Let $X,Y$ be bases of $P,R.$

We have the Hopf fibration $\pi : L_P \to X,\pi': L_R\to Y$ whose fibers are circles.
Analizing  the spectral sequences of the mappings $\pi$ and $\pi'$ (see  \cite{orlik}, \cite{kol}), we have that the corresponding Betti numbers are given as follows:

1) $b_i(L_P, \mathbb Q) = b_i(X,\mathbb Q)-b_{i-2}(X,\mathbb Q)$  if $i\le {\rm dim} \ X$ 

2)  $b_{i+1}(L_P, \mathbb Q) = b_i(X,\mathbb Q)-b_{i+2}(X,\mathbb Q)$ if $i\ge {\rm dim} X.$

3) $b_i(L_R, \mathbb Q) = b_i(Y,\mathbb Q)-b_{i-2}(Y,\mathbb Q)$  if $i\le {\rm dim} \ X$ 

4)  $b_{i+1}(L_R, \mathbb Q) = b_i(Y,\mathbb Q)-b_{i+2}(Y,\mathbb Q)$ if $i\ge {\rm dim} X.$

Let deg $P=p$, deg $R=r.$ Since $P,R$  have homotopic links $L_P\sim L_R$, we see by 1) - 4)  and Theorem \ref{hyp}  that
$b_n(X,\mathbb Q)=b_n(Y,\mathbb Q)$, i.e., $$\frac{(p-1)^{n+2}+(-1)^{n+1}}{p}=\frac{(r-1)^{n+2}+(-1)^{n+1}}{r}.$$ Since the function $f(x)=\frac{(x-1)^{n+2}+(-1)^{n+1}}{x}$ increases for $x\ge 1$ we have $p=r.$
\end{proof}

In the same way we can prove:

\begin{theorem}\label{2}
Let $P,R $ be two algebraic  cones with smooth bases $X,Y$. If $P,R$  are homeomorphic, then  $X$ and $Y$ have the same Betti numbers. In particular $\chi(X)=\chi(Y).$
\end{theorem}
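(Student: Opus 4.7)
My approach mirrors the proof of Theorem \ref{1}: the only new ingredient is that I extract the full Betti sequence of the base from the link, not merely the middle Betti number.

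First I would set up the link equivalence exactly as in Theorem \ref{1}. By Prill's theorem \cite{prill}, if $0$ is a topologically regular point of $P$ then $P$ is a linear subspace (so the base $X$ is a projective subspace), and similarly for $R$. If both cones are linear subspaces, their bases are projective subspaces of the same complex dimension (forced by the homeomorphism preserving topological dimension), and the conclusion is immediate. Otherwise $0$ is the unique topologically singular point of each cone, so any homeomorphism $P \to R$ sends $0$ to $0$, and $P^{\ast} = P \setminus \{0\}$ is homeomorphic to $R^{\ast} = R \setminus \{0\}$. The radial retraction $x \mapsto x/\|x\|$ deformation-retracts $P^{\ast}$ onto $L_P$ and $R^{\ast}$ onto $L_R$, so $L_P$ and $L_R$ are homotopy equivalent; in particular $b_i(L_P,\mathbb{Q}) = b_i(L_R,\mathbb{Q})$ for all $i$, and $\dim_{\mathbb{C}} X = \dim_{\mathbb{C}} Y =: n$.

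Next I would invert formula 1) from the proof of Theorem \ref{1}. Telescoping $b_i(L_P,\mathbb{Q}) = b_i(X,\mathbb{Q}) - b_{i-2}(X,\mathbb{Q})$ yields
\[
b_i(X,\mathbb{Q}) \;=\; \sum_{j \ge 0} b_{i-2j}(L_P,\mathbb{Q}), \qquad i = 0, 1, \ldots, n,
\]
and analogously for $Y$ and $L_R$. Since the Betti numbers of the links coincide, $b_i(X,\mathbb{Q}) = b_i(Y,\mathbb{Q})$ for $i \le n$. Because $X$ and $Y$ are smooth complex projective varieties of complex dimension $n$, they are closed oriented real $2n$-manifolds, so Poincar\'e duality supplies $b_i(X,\mathbb{Q}) = b_{2n-i}(X,\mathbb{Q})$ and $b_i(Y,\mathbb{Q}) = b_{2n-i}(Y,\mathbb{Q})$. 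Hence $b_i(X) = b_i(Y)$ for \emph{every} $i$, and taking alternating sums gives $\chi(X) = \chi(Y)$.

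No step is truly difficult; the whole argument is a direct adaptation of Theorem \ref{1}. The only mild subtlety I would flag is that formula 1) by itself recovers the Betti numbers of the base only through the middle dimension, which is why I appeal to Poincar\'e duality on $X$ and $Y$ to cover the upper half. One could instead combine formulas 1) and 2), but then one needs a consistency check at the middle index $i = n$, and invoking Poincar\'e duality on the base bypasses this cleanly.
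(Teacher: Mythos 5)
Your proof is correct and takes essentially the same route as the paper: Prill's theorem gives a homeomorphism of the punctured cones, hence a homotopy equivalence of the links, and the Betti numbers of the bases are recovered by inverting the circle-bundle formulas 1)--4) from the proof of Theorem \ref{1}. The only (harmless) variation is that you use Poincar\'e duality on the smooth projective bases to handle degrees above the middle, where the paper instead appeals to formulas 2) and 4).
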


\begin{proof}
As above, by \cite{prill} the spaces  $P^*$ and $R^*$ are also  homeomorphic. In particular, the links $L_P$, $L_R$ are homotopic.  By formulas 1)-4)  above we see that $X$, $Y$ have the same Betti numbers.
\end{proof}

\begin{remark}
{\rm If $S\subset \mathbb{C}^m$ is a homogeneous complex algebraic surface, it was proved in \cite{BobadillaFS:2018} that the torsion of $H^2(S\setminus \{0\})$ is equal to $\mathbb{Z}/{\rm deg} S \mathbb{Z}$. However, this cannot be extended to higher dimension. Indeed, let $Q=\mathbb {CP}^{1}\times\mathbb {CP}^{1}\subset \mathbb {CP}^{3}$ be the quadric and $L$ the link of $C(Q)$. Then $L$ is simply connected (see the proof of Theorem 3.1 in \cite{FernandesJS:2023}). By \cite[Proposition 2.4]{bfsv}, $L$ is diffeomorphic to $\mathbb{S}^2\times\mathbb{S}^3$.  Hence by K\"unneth Formula, $L$ has free homology (and cohomology). Therefore the cohomology of $C(Q)\setminus\{0\}$ has no torsion.}
\end{remark}

\section{Equivalence of the Brieskorn-Pham hypersurfaces}

Here we prove 

\begin{theorem}\label{4}
Let $V=\{ z \in \C^{n}: z_1^{a_1}+...+z_n^{a_n}=0\}, W=\{ z\in \C^n: z_1^{b_1}+....+z_n^{b_n}=0\},$  where $2\le a_1=a_2=...=a_k<....\le a_n$ and $2\le b_1= b_2=...=b_l<....\le b_n$. Assume that $(V,0)$ is bi-Lipschitz equivalent to $(W,0).$ Then

1) $l=k$

2) $a_1=b_1.$

\noindent In particular $m(V,0)=m(W,0).$
\end{theorem}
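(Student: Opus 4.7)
The plan is to transfer the bi-Lipschitz equivalence $(V,0)\sim_{\mathrm{biLip}}(W,0)$ to an equivalence of tangent cones at the origin, and then to extract the multiplicity $a_1$ from the resulting cone geometry via Theorems~\ref{1} and~\ref{hyp}. The main external input is Sampaio's theorem on the bi-Lipschitz invariance of tangent cones of complex analytic germs. Since the initial form of $f_V=z_1^{a_1}+\cdots+z_n^{a_n}$ is the degree-$a_1$ polynomial $z_1^{a_1}+\cdots+z_k^{a_1}$, one has
$$
C_0 V \;=\; A\times \mathbb{C}^{n-k},\qquad A\;:=\;\{z\in\mathbb{C}^k:z_1^{a_1}+\cdots+z_k^{a_1}=0\},
$$
and analogously $C_0 W = B\times \mathbb{C}^{n-l}$ with $B\subset\mathbb{C}^l$ the Fermat cone of degree $b_1$. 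For $k\ge 2$, $A$ is the affine cone over the smooth Fermat hypersurface $X_A\subset\mathbb{CP}^{k-1}$ of degree $a_1$ and has an isolated singularity at the origin; for $k=1$ the tangent cone collapses to the hyperplane $\{z_1=0\}$.

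The claim $k=l$ would follow from the bi-Lipschitz invariance of the singular locus of a complex analytic germ, which is a pointwise consequence of Sampaio's theorem combined with the fact that smoothness at a point is equivalent to the tangent cone at that point being an affine subspace. For $k\ge 2$ the singular locus of $A\times\mathbb{C}^{n-k}$ is $\{0\}\times\mathbb{C}^{n-k}$, of real Hausdorff dimension $2(n-k)$; for $k=1$ the tangent cone is globally smooth and the singular locus is empty. These invariants match only when $k=l$.

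To prove $a_1=b_1$ in the range $2\le k=l\le n$, I would pass to the links at the origin. The link of $(A\times\mathbb{C}^{n-k},0)$ inside the unit sphere of $\mathbb{C}^n$ is canonically the topological join $L_A * S^{2(n-k)-1}$ of the link $L_A\subset S^{2k-1}$ with a sphere, and analogously for $B$. Since a bi-Lipschitz homeomorphism of germs restricts to a homotopy equivalence of the associated links (retract each punctured germ onto its link), one has
$$
L_A*S^{2(n-k)-1}\;\simeq\;L_B*S^{2(n-k)-1}.
$$
The K\"unneth formula for joins $\tilde H_i(X*S^m)\cong\tilde H_{i-m-1}(X)$ then forces $\tilde H_*(L_A)\cong\tilde H_*(L_B)$, and running the Hopf-fibration spectral sequence $S^1\hookrightarrow L_A\to X_A$ exactly as in formulas (1)--(4) of the proof of Theorem~\ref{1} lets me read off $b_{k-2}(X_A)=b_{k-2}(X_B)$. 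Theorem~\ref{hyp}, together with the strict monotonicity of the middle-Betti-number function on $[2,\infty)$ already exploited in the proof of Theorem~\ref{1}, then forces $a_1=b_1$, so $m(V,0)=m(W,0)$.

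The main obstacle is the residual case $k=l=1$, where both tangent cones are smooth hyperplanes and the argument above collapses since no information about $a_1$ or $b_1$ survives the passage to $C_0$. Here one must look below the leading term of $f_V$. Possible avenues are (i) to exploit the structure of $V$ as the $a_1$-sheeted branched cover of $\mathbb{C}^{n-1}$ under the projection $(z_1,\ldots,z_n)\mapsto(z_2,\ldots,z_n)$ and to prove that the degree of this cover is bi-Lipschitz invariant, or (ii) to combine the topological invariance of the Milnor number $\mu=\prod_i(a_i-1)$ with refined Brieskorn-Pham-specific invariants (monodromy zeta function, embedded resolution) in order to isolate $a_1$. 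This boundary case is where I expect the proof to genuinely need Brieskorn-Pham structure beyond the generic tangent-cone machinery.
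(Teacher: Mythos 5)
Your treatment of the generic case agrees in substance with the paper's own proof: pass to tangent cones via Sampaio's theorem, get $k=l$ from the bi-Lipschitz invariance of the singular set, and recover $a_1$ from the link of the Fermat cone through the Hopf-fibration formulas, Theorem~\ref{hyp}, and monotonicity of the middle Betti number. Your route to comparing the links (link of a product cone as a join with a sphere, then the suspension isomorphism $\tilde H_i(X*S^m)\cong \tilde H_{i-m-1}(X)$) is a correct variant of the paper's, which instead observes that the regular parts $T(V)\setminus{\rm Sing}\, T(V)=P^*\times\C^{n-k}$ and $T(W)\setminus{\rm Sing}\, T(W)=R^*\times\C^{n-k}$ are homeomorphic and deformation retract onto $P^*$, $R^*$, hence onto the links. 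One local error: your justification that ``smoothness at a point is equivalent to the tangent cone at that point being an affine subspace'' is false (the cusp $\{y^2=x^3\}$ has the reduced line $\{y=0\}$ as tangent cone but is singular at $0$); the invariance of the singular set under bi-Lipschitz homeomorphisms of complex analytic sets is a genuine theorem --- Lipschitz regular complex analytic sets are smooth, \cite{bir}, see also \cite[Theorem 4.2]{Sampaio:2016} --- and that is what the paper cites at this step.

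The genuine gap is the case $k=l=1$, which you explicitly leave open. This case is not marginal: when $k=1$ the reduced tangent cone is the hyperplane $\{z_1=0\}$ regardless of $a_1$ (e.g. $z_1^2+z_2^5+z_3^5$ and $z_1^3+z_2^5+z_3^5$ have identical tangent cones as sets), so every tangent-cone, link, or Betti-number invariant you set up is blind to $a_1$, and without it the conclusion $m(V,0)=m(W,0)$ is simply not proved. The paper does not need any new Brieskorn--Pham-specific machinery here, contrary to your expectation: it closes the whole range $l\le 2$ (it must also exclude $l=2$ from the cone argument, since there the tangent cone is a reducible union of $a_1$ hyperplanes and the smooth-connected-base setup of Theorem~\ref{1} does not literally apply --- though your join argument survives at $k=2$ by counting components of the link) by invoking the known result \cite[Proposition 1.6]{FernandesS:2016} (see also \cite[Theorem 5.3]{FernandesS:2023}): the relative multiplicities attached to the irreducible components of the tangent cone are bi-Lipschitz invariants, and when the tangent cone is a union of hyperplanes these recover the multiplicity. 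Your proposed avenue (i), bi-Lipschitz invariance of the degree of the branched cover $V\to\C^{n-1}$, is morally exactly this statement, but you neither prove nor cite it, so as written your argument establishes the theorem only for $k=l\ge 2$.
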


\begin{proof}
If $V$ is bi-Lipschitz equivalent to $W$, then by \cite{Sampaio:2016} also the tangent cones $T(V)=\{z\in \C^n: z_1^a+...+z_k^a=0\}$, $T(W)=\{z\in \C^n: z_1^b+...+z_l^b=0\}$ (where $a=a_1$ and $b=b_1$) are bi-Lipschitz equivalent.
But Sing $T(V)=\{0\}\times \C^{n-k}$, Sing $T(W)=\{0\}\times \C^{n-l}$. By \cite[Theorem 4.2]{Sampaio:2016} (see also \cite{bir}), we have $n-k=n-l$, hence $k=l.$ The case $l\le 2$ follows from   \cite[Proposition 1.6]{FernandesS:2016} (see also \cite[Theorem 5.3]{FernandesS:2023}). Hence we can assume that tangent cones are irreducible and reduced. Moreover $T(V)\setminus {\rm Sing} \ T(V)=P^*\times\C^{n-k}$ is homeomorphic to $T(W)\setminus {\rm Sing}\  T(W)=R^*\times\C^{n-k},$ 
where $P=\{ w\in \C^{k}: w_1^a+...+w_k^a=0\}$, $R=\{ w\in \C^{k}: w_1^b+...+w_k^b=0\}$.
This means that $P^*$ and $R^*$ are homotopic. Hence also $L_P$ and $L_R$ are homotopic. Now we end as in the proof of Theorem \ref{1} to obtain  $a=b.$
\end{proof}

% \begin{example}\label{example:two_cusps}
% Let $V=\{ (x,y)\in \C^2: x^2+y^3=0\}$ and $W=\{ (x,y)\in \C^2: x^3+y^4=0\}. $ Then $(V,0)$ and $(W,0)$ are homeomorphic, but $m(V,0)=2\not=3=m(W,0).$ Hence in Theorem \ref{4} we cannot avoid assumption that homeomorphism is bi-Lipschitz.
% \end{example}

\end{document}